\documentclass[12pt,reqno]{amsart}

\addtolength{\textwidth}{2cm} \addtolength{\hoffset}{-1cm}
\addtolength{\marginparwidth}{-1cm} \addtolength{\textheight}{2cm}
\addtolength{\voffset}{-1cm}

\usepackage{times}
\usepackage[T1]{fontenc}
\usepackage{mathrsfs}
\usepackage{latexsym}
\usepackage[dvips]{graphics}
\usepackage[titletoc,title]{appendix}
\usepackage{epsfig}

\usepackage{amsmath,amsfonts,amsthm,amssymb,amscd}
\input amssym.def
\input amssym.tex
\usepackage{color}
\usepackage{hyperref}

\usepackage{color}
\usepackage{breakurl}
\usepackage{comment}
\newcommand{\bburl}[1]{\textcolor{blue}{\url{#1}}}

\newcommand{\be}{\begin{equation}}
\newcommand{\ee}{\end{equation}}
\newcommand{\bea}{\begin{eqnarray}}
\newcommand{\eea}{\end{eqnarray}}

\newtheorem{thm}{Theorem}[section]

\newtheorem{lem}[thm]{Lemma}




\numberwithin{equation}{section}

\providecommand{\floor}[1]{\lfloor #1 \rfloor}



\begin{document}

\title{When the Nontrivial, Small Divisors of a Natural Number are in Arithmetic Progression}

\author{H\`ung Vi\d{\^e}t Chu}
\email{\textcolor{blue}{\href{mailto:hungchu2@illinois.edu}{hungchu2@illinois.edu}}}
\address{Department of Mathematics, University of Illinois at Urbana-Champaign, Urbana, IL 61820, USA}

\thanks{2010 \textit{Mathematics Subject Classification}. Primary: 11B25 }
\thanks{\textit{Key words}: divisor, arithmetic progression.}

\begin{abstract}
Iannucci considered the positive divisors of a natural number $n$ that do not exceed $\sqrt{n}$ and found all forms of numbers whose such divisors are in arithmetic progression. In this paper, we generalize Iannucci's result by excluding the trivial divisors $1$ and $\sqrt{n}$ (when $n$ is a square). Surprisingly, the length of our arithmetic progression cannot exceed $5$. 
\end{abstract}

\maketitle
\section{Introduction and main results}
For each $n\in\mathbb{Z}_{\ge 1}$, its divisors not exceeding $\sqrt{n}$ are called \textit{small}. The phrase \textit{small divisors}, as used here, is not to be confused with classical small divisor problems of mathematical physics. In a recent paper, Iannucci \cite{Ian} showed a nice and surprising result that charaterizes all natural numbers whose small divisors are in arithmetic progression (AP). In particular, Iannucci defined
\begin{align}\label{kk}
    S_n\ :=\ \{d\ :\ d\ |\ n, d\le \sqrt{n}\}.
\end{align}
and analyzed the divisor-counting function to argue about the prime factorization of $n$ when $|S_n|\le 6$. Then he showed that if $S_n$ is in AP, $|S_n|$ cannot be greater than $6$ and finished the proof. For previous work on divisors in AP, see \cite{BFL, Var} and on small divisors, see \cite{BTK, Ian2}.

By Definition \eqref{kk}, $1$ is in $S_n$. This trivial divisor gives information about the AP and plays a crucial role in the argument of Iannucci. Our goal is to exclude $1$ and $\sqrt{n}$ from consideration to produce a more general theorem. For a natural number $n$, define
$$A_n\ :=\ \{d\,:\, d\ |\ n, 1<d<\sqrt{n}\}.$$
We shall determine all $n$ such that $A_n$ is in AP; that is, 
$$A_n \ =\ \{d, a+d, 2a+d, \ldots, (k-1)a+d\},$$
where $d$ is the first term, $a > 0$ is the common difference and $k$ is the number of terms in the AP. Then the $i$th term in $A_n$ is $d+(i-1)a$. Observe that if we write $n = p_1^{a_1}\cdots p_\ell^{a_\ell}$, where $p_1<p_2<\cdots< p_\ell$ are primes and $a_i\ge 1$, then $d = p_1$. For easy reading, we stick with these notation throughout the paper. By hypothesis, $A_n$ is in AP and $n\ge 2$. 
The following is our main theorem.
\begin{thm}\label{main0}
For all $n\in\mathbb{Z}_{\ge 1}$, we have $|A_n|\le 5$. In addition, if we let $p$, $q$, and $r$ denote distinct prime numbers, then one of the following is true:
\begin{itemize}
    \item [(i)] $n = p$ or $n = p^2$ for some $p$, hence $A_n = \emptyset$. 
    \item [(ii)] $n = pq$ for some $p<q$, hence $A_n = \{p\}$.
    \item [(iii)] $n = p^3$ or $n = p^4$ for some $p$, hence $A_n  = \{p\}$.
    \item [(iv)] $n = p^5$ for some $p$, hence $A_n = \{p, p^2\}$.
    \item [(v)] $n = pq^2$, where $p<q$, hence $A_n = \{p,q\}$.
    \item [(vi)] $n = p^2q$, where $p^2<q$, hence $A_n = \{p, p^2\}$.
    \item [(vii)] $n = p^2q$, where $p < q< p^2$, hence $A_n = \{p, q\}$.
    \item [(viii)] $n = p^6$ for some  $p$, hence $A_n = \{p, p^2\}$.
    \item [(ix)] $n = 36$, hence $A_{n} = \{2, 3, 4\}$.
    \item [(x)] $n = pqr$, where $p<q<r$ such that $2q = p+r$, hence $A_n = \{p, q, r\}$. (We have infinitely many triple of primes $(p, q, r)$ such that $2q = p+r$ because there are arbitrarily long arithmetic progressions of primes \cite{GT}.)
    \item [(xi)] $n = 24$, hence $A_n = \{2, 3, 4\}$.
    \item [(xii)] $n = 60$, hence $A_n = \{2,3,4,5,6\}$.
\end{itemize}
\end{thm}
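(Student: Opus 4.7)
The plan is to reduce the theorem to a bound on the divisor-counting function $\tau(n)$ and then to a finite case analysis. Pairing each divisor $d\le \sqrt n$ with its conjugate $n/d$ yields $|S_n| = \lceil \tau(n)/2\rceil$, and after removing the trivial divisors $1$ and (in the square case) $\sqrt n$ one obtains
\[
|A_n| \;=\; \tau(n)/2 - 1 \quad (n\text{ not a square}), \qquad |A_n| \;=\; (\tau(n)-3)/2 \quad (n\text{ a square}).
\]
Consequently $|A_n|\le 5$ is equivalent to $\tau(n)\le 12$ in the non-square case and $\tau(n)\le 13$ in the square case, and for each such bound there are only finitely many shapes $(a_1,\ldots,a_\ell)$ of the prime factorization to examine.

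Two structural facts drive everything. First, whenever $A_n\neq\emptyset$ its smallest element is $p_1$, and the second smallest divisor of $n$ greater than $1$ is $d_2 = \min(p_1^2, p_2)$; the common difference $a$ of the AP is thereby forced once we know the shape of $n$ and the relative order of $p_1^2$ and $p_2$. Second, every term $p_1 + ja$ of the AP must divide $n$, and the lcm of any two elements of $A_n$ is again a divisor of $n$, hence lies in $A_n$ as soon as it stays below $\sqrt n$. The latter is the principal obstruction to long APs: if an intermediate divisor ``leaks in'' between two consecutive AP terms, the AP is destroyed. For instance, with $p_1 = 2$, $p_2 = 3$, $a = 1$, a length-$6$ AP would read $2,3,4,5,6,7$, forcing $7\mid n$; then $n\ge 420$ and $10 = 2\cdot 5$ divides $n$ with $10\le\sqrt n$, yet $10$ is not in the AP, a contradiction.

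The hard step is establishing the universal bound $|A_n|\le 5$. I plan to assume $|A_n|\ge 6$, split on whether $a = p_2 - p_1$ or $a = p_1(p_1 - 1)$, and run a leaking-divisor argument in each subcase. For small $(p_1,p_2)$ the obstruction is produced by explicit lcm's of primes already forced into the factorization of $n$; for large $p_1$ the AP terms $p_1 + ja$ with $j\le 5$ grow too fast to fit below $\sqrt n$ without making the exponents $a_i$ large enough to create additional small divisors outside the AP. This bootstrap is where I anticipate the greatest technical difficulty, in particular for borderline shapes like $n = p_1^{a_1} p_2^{a_2}$ where only two primes are available to be recombined.

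Once $|A_n|\le 5$ is proved, the classification is a matter of treating each value $|A_n|\in\{0,1,2,3,4,5\}$ separately: list the finitely many factorization shapes consistent with the corresponding $\tau(n)$ bound; for each shape write $A_n$ explicitly in terms of $p_1,\ldots,p_\ell$ (with subcases for the relative sizes of prime powers such as $p_2$ vs.\ $p_1^2$); and impose the AP condition as a short polynomial equation in the primes. The twelve families (i)--(xii) then emerge as the complete solution set, with the sporadic values $n=24,36,60$ corresponding to small-prime coincidences like $2+3=5$. Apart from the length bound, this final step is bookkeeping.
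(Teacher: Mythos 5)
Your reduction to $\tau(n)$ and the final classification plan match the paper: the identities $|A_n| = \tau(n)/2-1$ (non-square) and $(\tau(n)-3)/2$ (square) are exactly the paper's equation \eqref{k1}, and the case-by-case treatment of the finitely many factorization shapes for $|A_n|=0,\dots,5$ is indeed the bookkeeping the paper carries out (with the sporadic $n=24,36,60$ arising just as you predict). The problem is the part you yourself flag as the ``hard step'': you never actually prove $|A_n|\le 5$, you only announce an intention to prove it. The split $p_1+a=p_2$ versus $p_1+a=p_1^2$ is the right opening move (it is how the paper shows $a$ must be even and $p_1\ge 3$ when $k\ge 6$), and your leaking-divisor idea handles small $(p_1,p_2,a)$ in the spirit of Iannucci's Lemma 2, but your proposed mechanism for large common difference --- that the terms $p_1+ja$ ``grow too fast to fit below $\sqrt n$'' --- is not a workable obstruction: nothing a priori bounds $n$, so there is no tension between six terms $p_1+ja$ and the size of $\sqrt n$. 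Even your worked example for $(p_1,p_2,a)=(2,3,1)$ is incomplete as stated, since $10\in A_n$ only forces the AP to extend through $8,9,10$, which requires a further (convergent, but unexecuted) escalation rather than an immediate contradiction.

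The paper closes this gap with two ideas absent from your proposal. First (Lemma \ref{k4}), if $k\ge 6$ and $k\le a+2$, then at least $k-\lfloor (k-1)/3\rfloor$ of the terms $p_1+ja$ must be \emph{prime} --- any composite term would factor as $(ba+p_1)(ca+p_1)\ge (a+p_1)^2>(k-1)a+p_1$, too large to lie in $A_n$ --- which forces $\tau(n)\ge 2^{k-\lfloor(k-1)/3\rfloor}$, contradicting $\tau(n)\le 2k+3$; hence $k\ge a+3$. Second (Lemma \ref{k5}), Bertrand's postulate supplies a prime $q$ with $a/2<q<a$; since $q\nmid a$, some term $p_1+ja$ with $j\le q$ is divisible by $q$, so $q\mid n$ and $q\in A_n$, forcing $q=p_1$ and then $a+1\in A_n$, which is incompatible with $A_n=\{q,q+a,\dots\}$. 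Without these (or equivalent) counting and Bertrand-type arguments, the central claim of the theorem is unproved, so the proposal has a genuine gap precisely where the theorem's content lies.
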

Since $A_n\subset S_n$ for any $n\in \mathbb{Z}_{\ge 1}$, it is straightforward to deduce \cite[Theorem 4]{Ian} from our Theorem \ref{main0}. As usual, we have the divisor-counting function
$$\tau(n) \ :=\ \sum_{d|n}1.$$ Since $\tau(n)$ is multiplicative, for the $m$ distinct primes $p_1<p_2<\cdots<p_m$, and natural numbers $a_1, a_2, \ldots, a_m$,
\begin{align}\label{multau}\tau(p_1^{a_1}p_2^{a_2}\cdots p_m^{a_m})\ =\ (a_1+1)(a_2+1)\cdots (a_m+1).\end{align}
If $n = bc$ and $b\le c$, then $b\le \sqrt{n}\le c$; hence,
\begin{align*}
    \tau(n)\ =\ \begin{cases} 2|S_n|-1&\mbox{ if }n \mbox{ is a square};\\
    2|S_n| &\mbox{ if } n\mbox{ is not a square}.
    \end{cases}
\end{align*}
Since 
\begin{align*}
    |S_n|\ =\ \begin{cases} |A_n|+2&\mbox{ if }n \mbox{ is a square};\\
    |A_n|+1 &\mbox{ if } n\mbox{ is not a square},
    \end{cases}
\end{align*}
we have
\begin{align}
    \label{k1}
    \tau(n)\ =\ \begin{cases} 2|A_n|+3&\mbox{ if }n \mbox{ is a square};\\
    2|A_n|+2 &\mbox{ if } n\mbox{ is not a square}.
    \end{cases}
\end{align}
In Section \ref{kkk}, we prove that for all $n\in\mathbb{Z}_{\ge 1}$, $|A_n|\le 5$. In Section \ref{kkk} and the Appendix, we analyze $|A_n| = k$ for each $0\le k\le 5$ to prove Theorem \ref{main0}. Since the case analysis when $k = 5$ is lengthy and not interesting, we move it to the Appendix. 

\section{Proof of the first part of Theorem \ref{main0}}\label{kkk}
In this section, we prove that $|A_n|\le 5$. Note that this bound is sharp since $A_{60} = \{2, 3, 4, 5, 6\}$. We state an immediate consequence of \cite[Lemma 2]{Ian} that will be used in due course. 
\begin{lem}\label{0218L1}
If $(d, a) = (2, 1)$ or $(3,2)$, then $|A_n|\le 5$.
\end{lem}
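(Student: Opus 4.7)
The plan is to reduce both cases to a direct application of Iannucci's Lemma~2. The key observation is that in both of the listed parameter pairs, the first term and common difference satisfy $d = 1 + a$, so prepending the trivial divisor $1$ to $A_n$ yields a strictly longer arithmetic progression of small divisors of $n$.

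First, I would spell the extension out explicitly. For $(d, a) = (2, 1)$,
\[
\{1\} \cup A_n \;=\; \{1, 2, 3, \ldots, |A_n| + 1\}
\]
is an AP of length $|A_n|+1$ with common difference $1$. For $(d, a) = (3, 2)$,
\[
\{1\} \cup A_n \;=\; \{1, 3, 5, \ldots, 2|A_n| + 1\}
\]
is an AP of length $|A_n| + 1$ with common difference $2$. In either case every element of the extended set divides $n$ and does not exceed $\sqrt{n}$ (the elements of $A_n$ by hypothesis, and $1$ trivially), so the AP sits inside $S_n$.

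Next I would invoke Iannucci's Lemma~2, which is precisely the tool that bounds the length of an AP of small divisors of $n$ beginning at $1$ with common difference $1$ or $2$ by at most $6$. Combining this with the previous step gives $|A_n| + 1 \le 6$, that is, $|A_n| \le 5$, which is the claim.

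The main obstacle here is essentially a matter of recognition rather than calculation: one must notice that the specific pairs $(d, a) = (2, 1)$ and $(3, 2)$ are exactly those for which $1$ naturally plays the role of the missing zeroth term of the AP $A_n$, so that Iannucci's lemma applies verbatim. A minor subtlety worth checking is the perfect-square case: when $n = m^2$, the extra small divisor $\sqrt{n} = m$ lies outside $A_n$ and may or may not fall on the AP, but this is irrelevant because the bound is being applied to the length of the AP $\{1\} \cup A_n$ inside $S_n$, not to $|S_n|$ itself.
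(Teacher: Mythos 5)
Your overall plan---prepend the trivial divisor $1$ so that $\{1\}\cup A_n$ becomes an arithmetic progression starting at $1$ with common difference $1$ or $2$, and then cite Iannucci's Lemma~2---is exactly the reduction the paper intends; the paper offers no proof beyond calling the statement ``an immediate consequence of \cite[Lemma 2]{Ian}.'' The problem is the form in which you invoke that lemma. You describe it as ``the tool that bounds the length of an AP of small divisors of $n$ beginning at $1$ with common difference $1$ or $2$ by at most $6$,'' and you stress at the end that you are applying it to the progression $\{1\}\cup A_n$ \emph{inside} $S_n$, i.e.\ using only the containment $\{1\}\cup A_n\subseteq S_n$. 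No true lemma can have that form: $n=420=2^2\cdot 3\cdot 5\cdot 7$ has $1,2,3,4,5,6,7$ among its divisors, all below $\sqrt{420}\approx 20.5$, so $S_{420}$ contains an AP of length $7$ with first term $1$ and common difference $1$; likewise $n=45045=3^2\cdot 5\cdot 7\cdot 11\cdot 13$ has the seven small divisors $1,3,5,7,9,11,13$ in AP with common difference $2$. A bound on the length of a mere sub-progression of $S_n$ is therefore false, so either you are misquoting Iannucci's Lemma~2 or your intermediate statement cannot be what that lemma says.

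What the argument must use---and what your write-up never uses---is that $A_n$ is the \emph{complete} set of divisors of $n$ strictly between $1$ and $\sqrt n$. The correct mechanism is that if $1,2,\dots,m$ (resp.\ $1,3,\dots,2m-1$) all divide $n$ with $m\ge 7$, then $n$ necessarily acquires a further divisor $d<\sqrt n$ lying off the progression (for instance $10\mid 420$ in the first example above), and this contradicts the hypothesis that $A_n$ \emph{equals} the progression rather than merely containing it. Your dismissal of the perfect-square case inherits the same flaw, since it too leans on the ``sub-AP inside $S_n$'' reading. The repair is short---state Iannucci's Lemma~2 in its actual form and derive the contradiction from the exhaustiveness of $A_n$---but as written the decisive step rests on a false statement.
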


\begin{lem}\label{k2}
If $a = 1$ or $a = 2$, then $|A_n| = k\le 5$. 
\end{lem}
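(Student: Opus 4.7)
The strategy is to split on the value of $a\in\{1,2\}$ and on the smallest prime $p_1=d$ of $n$. The two key facts I will use repeatedly are: every element $d+ia$ of $A_n$ divides $n$; and, conversely, every divisor of $n$ strictly between $1$ and $\sqrt{n}$ must appear in $A_n$. In the two subcases where $(d,a)=(2,1)$ or $(3,2)$, the desired bound $k\le 5$ is handed to us by Lemma \ref{0218L1}. In every other subcase I expect to pin down $p_1$ by a parity-or mod-$3$ argument that forces a smaller prime into $n$, contradicting minimality of $p_1$ unless $k$ is already very small.

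\textbf{Case $a=1$.} I would first dispatch the trivial possibility $k\le 1$ and then assume $k\ge 2$, so that $p_1+1\in A_n$ and in particular $(p_1+1)\mid n$. If $p_1\ge 3$, then $p_1+1$ is even, forcing $2\mid n$ and contradicting that $p_1$ is the smallest prime factor of $n$. Hence $p_1=2$, which is exactly $(d,a)=(2,1)$, and Lemma \ref{0218L1} finishes the case.

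\textbf{Case $a=2$.} I would split into three subcases according to $p_1$. If $p_1=3$, then $(d,a)=(3,2)$ and Lemma \ref{0218L1} applies directly. If $p_1=2$, every element of $A_n$ is even; I would assume for contradiction that $k\ge 3$, observe that $p_1+4=6\in A_n$ forces $6<\sqrt{n}$ and $3\mid n$, and then note that $3$ is a divisor of $n$ in $(1,\sqrt n)$ that does not belong to the even set $A_n$—contradiction, so $k\le 2$. If $p_1\ge 5$, I would again assume $k\ge 3$ so that both $p_1+2$ and $p_1+4$ lie in $A_n$; since $p_1,\,p_1+2,\,p_1+4$ form a complete residue system modulo $3$ and $p_1\not\equiv 0\pmod 3$ (as $p_1\ge 5$ is prime), one of $p_1+2,\,p_1+4$ is divisible by $3$, hence $3\mid n$, contradicting $p_1\ge 5$; so again $k\le 2$.

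\textbf{Main obstacle.} The argument is essentially modular and not technically hard; the only thing to be careful about is the direction of the hypothesis ``$A_n$ is in AP''—specifically, using not only that every element of the AP divides $n$ but also the converse, that every divisor of $n$ in $(1,\sqrt n)$ lies in the AP. This converse is what generates the decisive contradiction in the $p_1=2,\,a=2$ subcase, where $3$ must appear in $A_n$ yet the AP contains only even numbers. Aside from this, I need to keep the edge cases $k\in\{1,2\}$ in mind so as not to invoke AP terms that are not actually present.
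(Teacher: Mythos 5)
Your proof is correct, and for the harder half ($a=2$) it takes a genuinely different and shorter route than the paper. The paper first rules out prime powers (via $p,p^2,p^3$ not being in AP) to guarantee a second prime factor $p_2$, and then, for $a=2$, runs a multi-level case analysis on whether $p_1+2$ and $p_1+4$ are prime, ending with Diophantine checks such as $p_2^{u_2}=p_2+2$ having no prime solutions. You instead trichotomize on $p_1$: the subcases $(d,a)=(2,1)$ and $(3,2)$ are delegated to Lemma \ref{0218L1} exactly as in the paper, while $p_1=2$ is killed by the observation that $6\in A_n$ forces the odd divisor $3$ into the all-even set $A_n$, and $p_1\ge 5$ is killed by the fact that $p_1$, $p_1+2$, $p_1+4$ cover all residues mod $3$, so $3\mid n$ contradicts minimality of $p_1$. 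Your $a=1$ argument (parity of $p_1+1$) is likewise a mild streamlining of the paper's prime/composite dichotomy for $p_1+1$. What your version buys is brevity and the elimination of the paper's preliminary step about the number of prime factors of $n$; what the paper's version buys is nothing extra here --- your residue arguments are complete, and you correctly use both directions of the hypothesis (elements of the AP divide $n$, and every divisor of $n$ in $(1,\sqrt n)$ lies in the AP), which is the one place a blind attempt could have gone wrong.
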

\begin{proof}
For a contradiction, we assume that $|A_n|\ge 6$. If $n$ has exactly one prime divisor, then write $n = p^u$ for some prime $p$ and $u\ge 2k+1$. So, $A_n = \{p, p^2, p^3, \ldots, p^{k}\}$. However, $p, p^2, p^3$ are not in AP for all $p$. Hence, $n$ must have at least two distinct prime divisors, called $p_1$ and $p_2$, where $p_1 < p_2$. 
\begin{enumerate}
\item First, we consider $a = 1$. The two smallest numbers in $A_n$ are $p_1$ and $p_1 + 1$. Since $p_1< p_2$, it follows that $p_1+1\le p_2$. There are two possibilities:  
\begin{itemize}
    \item[(i)] If $p_1 + 1$ is not a prime, then $p_1 + 1 \ge p_1^2$ because $(p_1+1)| n$. For all primes $p$, we have $p^2>p+1$, so this case cannot happen. 
    \item[(ii)] If $p_1 + 1$ is a prime, then $p_1 + 1 = p_2$. So, $p_1 = 2$ and $p_2 = 3$. It follows that $A_n = \{2,3,4,\ldots, m\}$ for $m\ge 7$. However, by Lemma \ref{0218L1}, we have $m\le 6$, a contradiction.
\end{itemize}

\item Suppose that $a=2$. The three smallest numbers in $A_n$ are $p_1, p_1+2$ and $p_1+4$. Suppose that $n$ has three distinct prime factors. It follows that $p_1+2\le p_2$; otherwise, $(p_1, p_2) = (2, 3)$, which, by Lemma \ref{0218L1}, implies that $|A_n|\le 5$, a contradiction. Since $p_1 + 2\le p_2$, we have $p_1 + 4 \le p_2 + 2\le p_3$. There are two possibilities for $p_1+2$:
\begin{itemize}
\item[(i)] If $p_1 +2$ is not a prime, then $p_1 + 2 \ge p_1^2$, which implies that $p_1=2$. Then the three smallest numbers in $A_n$ are $2, 4, 6$. However, $6\ |\ n$ implies that $3\ |\ n$, so $3\in A_n$, a contradiction.  
    \item [(ii)] If $p_1 + 2$ is a prime, then $p_1 + 2 = p_2$. We consider two subcases.
    \begin{itemize}
        \item[(a)]If $p_1 + 4 = p_3$, then at least one of $p_1, p_2, p_3$ is divisible by $3$. It must be that $p_1 = 3$. So, $A_n = \{3, 5, 7,\ldots, 2m-1\}$ for some $m\ge 7$. By Lemma \ref{0218L1}, we have a contradiction. 
        \item[(b)] If $p_1+4\ <\ p_3$, 
    Write $p_1 + 4 = p_1^{u_1} p_2^{u_2}$ for some $u_1, u_2\ge 0$. Then 
    \begin{align*}p_1^{u_1}p_2^{u_2} - 2 &\ =\ p_1 + 2\ =\ p_2\mbox{, and so}\\
    p_1^{u_1}p_2^{u_2} &\ =\ p_2 + 2.\end{align*}
    Clearly, either $u_1$ or $u_2 = 0$. If $u_1 = 0$, $p_2^{u_2} = p_2 + 2$ has no odd prime solutions $p_2$ for all $u_2\ge 0$. If $u_2 = 0$, $p_1^{u_1} = p_2 + 2 = p_1 + 4$ has no prime solutions $p_1$ for all $u_1\ge 0$.  
    \end{itemize}
\end{itemize}
So, $n$ has exactly two distinct prime factors $p_1$ and $p_2$. We write $p_1+4 = p_1^{u_1}p_2^{u_2}$ for some $u_1, u_2\ge 0$. As above, we have a contradiction. 
\end{enumerate}
Therefore, $k = |A_n| \le 5$, as desired. 
\end{proof}


\begin{lem}\label{k3} If $k := |A_n|\ge 6$, then $a$ is even and $p_1\ge 3$, where $a$ is the common difference and $p_1$ is the least term in $A_n$.   
\end{lem}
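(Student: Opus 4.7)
The plan is to prove the two conclusions separately, each by contradiction, using Lemma~\ref{k2} to eliminate the cases $a\in\{1,2\}$ and then combining parity with explicit divisibility.

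First I would show $a$ is even. Assuming $a$ is odd (so $a\ge 3$ by Lemma~\ref{k2}), I split on $p_1$. If $p_1\ge 3$, then every prime factor of $n$ is odd, hence $n$ is odd; yet $p_1+a$ is even (odd$+$odd) and lies in $A_n$, forcing an even divisor of an odd number, a contradiction. If $p_1=2$, the third term $p_1+2a=2(1+a)$ is divisible by $4$ because $a$ is odd, so $4\mid n$. Since $a\ge 3$ gives $4<2+2a$, and $2+2a\in A_n$ gives $2+2a<\sqrt{n}$, we have $4<\sqrt{n}$ and $4\mid n$, so $4\in A_n$. Writing $4=2+ja$ then forces $ja=2$, so $(j,a)\in\{(1,2),(2,1)\}$, contradicting Lemma~\ref{k2}.

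Next I would show $p_1\ge 3$. Assume $p_1=2$; by the previous step every element of $A_n$ is even. If $n$ had two distinct odd prime divisors $p_2<p_3$, then both are odd divisors of $n$ strictly exceeding $1$; since $A_n$ contains no odd number, both must be $\ge \sqrt{n}$, whence $p_2p_3>n$, contradicting $p_2p_3\mid n$. So $n=2^u$ or $n=2^up_2^v$ for a single odd prime $p_2$. In the former case $A_n$ is automatically contained in the powers of $2$. In the latter, $p_2\ge \sqrt{n}$ together with $p_2^v\mid n$ forces $v=1$ and $p_2>2^u$; then the inequality $2^ip_2<\sqrt{2^up_2}$ reduces to $2^{2i-u}p_2<1$, impossible for $i\ge 0$, so no $2^ip_2$ belongs to $A_n$ and again $A_n\subseteq\{2,4,8,\ldots\}$. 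Finally, three distinct powers of $2$ cannot be in AP (from $2^{b+1}=2^a+2^c$ with $a<b<c$, the odd number $1+2^{c-a}$ would have to be a power of $2$), so $|A_n|\le 2$, contradicting $|A_n|\ge 6$.

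The main obstacle is the second step: once $p_1=2$ and $a$ is even, one must carefully size $p_2$ against $\sqrt{n}$ so as to exclude every odd divisor and every multiple $2^ip_2$ from $A_n$. Once these exclusions are in place, the contradiction falls out of the elementary observation that no three powers of $2$ can be in arithmetic progression.
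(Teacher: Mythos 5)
Your proof is correct, but you take a genuinely different route from the paper on the case $p_1=2$. The paper dispatches that case in two lines: the third term of $A_n$ is $2+2a=2(a+1)$, so $a+1$ is a divisor of $n$ with $1<a+1<2+2a<\sqrt{n}$, hence $a+1\in A_n$; but $a+1$ is smaller than the second term $a+2$, so it must equal the first term $2$, forcing $a=1$ and contradicting Lemma~\ref{k2}. This single observation eliminates $p_1=2$ outright and simultaneously yields both conclusions. You instead split on the parity of $a$: for $a$ odd you extract $4\mid n$ from the third term and locate $4$ inside the progression (a nice localized argument, and your handling of $p_1\ge 3$ via ``$n$ odd has no even divisors'' is essentially the paper's parity argument in more elementary clothing); for $a$ even you prove the structural statement that $A_n$ must consist entirely of powers of $2$ --- ruling out a second odd prime by the $\ge\sqrt{n}$ sizing argument and excluding every divisor $2^ip_2$ --- and then invoke the fact that no three distinct powers of $2$ are in arithmetic progression. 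Your version is longer and requires the careful bookkeeping you flag as the main obstacle, but it buys a stronger intermediate fact (when $p_1=2$ and $a$ is even, $|A_n|\le 2$, not merely $\le 5$) and does not lean on the structure of the third term beyond the initial parity step. Both arguments are sound; the paper's is the more economical, yours the more structural.
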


\begin{proof}
Due to the proof of Lemma \ref{k2}, $n$ has at least one prime factor other than $p_1$. Let $p_2$ denote the prime factor. We consider two cases corresponding to whether $p_1\ge 3$ or $p_1 = 2$. 
\begin{enumerate}
\item Assume $p_1\ge 3$. Since $p_1 + a\le p_2$, we have either $p_1 + a = p_2$ (if $p_1 + a$ is a prime) or $p_1 + a = p_1^2$ (if $p_1+a$ is not a prime). In both cases, $a$ is even.

\item Assume $p_1 = 2$. Then $A_n = \{2, 2+a, 2+2a, \ldots, 2+(k-1)a\}$. We show a contradiction. The third term in $A_n$ is $2(a+1) = p_1(a+1)$. Hence, we have $a+1\ |\ n$ and $a+1 < a+2$, so $a+1 = 2$ and $a=1$. By Lemma \ref{k2}, $k\le 5$, a contradiction. 
\end{enumerate}
We have shown that if $|A_n| = k\ge 6$, $a$ must be even and $p_1 \ge 3$.
\end{proof}

\begin{lem}\label{k4}
If $a> 2$ and $k\ge 6$, then $k\ge a+3$. 
\end{lem}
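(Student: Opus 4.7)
The plan is to argue by contradiction: assume $a > 2$, $k \ge 6$, and $k \le a+2$, and derive a contradiction. By Lemma~\ref{k3}, $a$ is even (hence $a \ge 4$) and $p_1 \ge 3$. The central observation is that whenever a term $p_1+ja \in A_n$ is composite, its least prime factor must itself lie in $A_n$, which forces strong structural constraints on the progression.

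My first step is to rule out $p_1 \mid a$. Writing $a = p_1 b$ with $b \ge 1$, each term $p_1+ja = p_1(1+jb)$ makes $1+jb$ a divisor of $n$ lying in $(1,\sqrt{n})$, hence in $A_n$; but every element of $A_n$ has the form $p_1(1+j'b)$, so $p_1 \mid 1+jb$ for each admissible $j$. Taking $j = 1,2$ yields $p_1 \mid 1+b$ and $p_1 \mid 1+2b$, so $p_1 \mid b$ and then $p_1 \mid 1$, impossible. Thus $\gcd(p_1,a)=1$. I would then prove the key structural lemma: for every $j < p_1$, the term $p_1+ja$ is prime. Indeed, if its least prime factor is some $q < p_1+ja$, then $q \mid n$ and $q \le \sqrt{p_1+ja} < \sqrt{n}$, so $q \in A_n$; writing $q = p_1 + la$ with $0 \le l < j$ and combining $q \mid (j-l)a$ with $\gcd(q,a)=1$ (either $q = p_1$ by the previous step, or $q \ge p_1+a > a$) gives $q \mid j-l$, whence $j \ge l + q \ge p_1$, a contradiction.

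I then split on whether $k \le p_1$ or $k \ge p_1+1$. In the first case all $k$ terms of $A_n$ are distinct primes $p_i := p_1 + (i-1)a$ dividing $n$; since $p_1 p_2$ is a composite divisor, it cannot lie in $A_n$, so $p_1 p_2 \ge \sqrt{n}$ and $n \le p_1^2 p_2^2$; but $n \ge p_1 p_2 p_3 p_4 p_5 p_6$, giving $p_3 p_4 p_5 p_6 \le p_1 p_2$, which contradicts $p_3 > p_1$ and $p_4 > p_2$. In the second case, $j = p_1$ is a valid index and $p_1(1+a)$ is composite, so $1+a \in A_n$; writing $1+a = p_1 + j'a$ with $j' \ge 0$ forces $a \mid p_1-1$ and $a \ge p_1-1$, hence $a = p_1-1$. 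Combining $p_1+1 \le k \le a+2 = p_1+1$ pins down $k = p_1+1$ and makes the last term of $A_n$ equal $p_1^2$. Then $p_1^2 \mid n$ and the $p_1-1$ primes $p_1+ja$ (for $1 \le j \le p_1-1$) all divide $n$, yielding $\tau(n) \ge 3 \cdot 2^{p_1-1}$, while \eqref{k1} gives $\tau(n) \le 2k+3 = 2p_1+5$. Since $a > 2$ forces $p_1 \ge 5$, the inequality $3 \cdot 2^{p_1-1} \le 2p_1+5$ fails already at $p_1=5$ (as $48 > 15$).

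The main obstacle I expect is the structural lemma on primality of $p_1+ja$ for $j < p_1$: one must be careful to verify that the least prime factor of a composite term really lands in $A_n$ (not below $p_1$, not equal to $\sqrt{n}$) and then extract coprimality with $a$ in two sub-cases depending on whether the factor equals $p_1$. Once the lemma is in hand, Case I closes by the elementary comparison $p_3 p_4 > p_1 p_2$ and Case II closes by matching $\tau(n)$ from below and above, so the remaining bookkeeping is routine.
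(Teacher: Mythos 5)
Your proof is correct, but it follows a genuinely different route from the paper's. Both arguments open the same way, by showing $p_1\nmid a$ from the fact that $A_n=\{p_1(1+jb)\}$ when $a=p_1b$ (the paper deduces $u+1=p_1$ and then $2u+1\ge (u+1)p_1$; you take $j=1,2$ and subtract congruences --- either works). The divergence is in how primality of the terms is extracted and how the contradiction is closed. The paper proves that \emph{every} term $p_1+ja$ with $p_1\nmid j$ is prime, and this is where it spends the hypothesis $k\le a+2$: a composite term would factor as $(ba+p_1)(ca+p_1)\ge (a+p_1)^2=(a+2p_1)a+p_1^2>(k-1)a+p_1$, which is impossible. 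That yields at least $k-\floor{(k-1)/3}$ distinct prime factors and a single uniform contradiction $2k+3\ge\tau(n)\ge 2^{k-\floor{(k-1)/3}}$. Your structural lemma instead shows $p_1+ja$ is prime for all $j<p_1$ by a congruence argument ($q\mid (j-l)a$, $\gcd(q,a)=1$, $0<j-l<q$), which notably does not use $k\le a+2$ at all; you then pay for the weaker range of $j$ with a case split. Your Case I ($k\le p_1$) closes very cleanly via $p_1p_2\ge\sqrt n$ versus $n\ge p_1\cdots p_6$ --- and in fact needs no hypothesis on $a$ versus $k$ --- while Case II is where $k\le a+2$ finally enters, pinning down $a=p_1-1$, $k=p_1+1$, the last term $p_1^2$, and the count $3\cdot 2^{p_1-1}>2p_1+5$. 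I checked the delicate points (that the least prime factor of a composite term lands in $A_n$, the coprimality of $q$ with $a$ in both subcases, and that $1+a\in A_n$ forces $a=p_1-1$) and they all hold. In short: the paper's proof is shorter and uniform; yours isolates a reusable primality statement that is independent of the hypothesis being contradicted, at the cost of a two-case endgame.
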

\begin{proof}
Suppose otherwise. That is, $a>2$ and $k\ge 6$, but $k\le a+2$. We will show a contradiction after proving the two following statements (i) and (ii).
\begin{itemize}
\item [(i)] $p_1\ \nmid\ a$. 
\item [(ii)] $n$ has at least $k - \floor{(k-1)/3}$ distinct prime factors. 
\end{itemize}

Proof of (i): Suppose that $p_1\ |\ a$. Write $a = up_1$ for some $u\in\mathbb{Z}_{\ge 1}$. Then
$$A_n \ =\ \{p_1, (u+1)p_1, (2u+1)p_1, \ldots, ((k-1)u+1)p_1\}.$$
Hence, $u+1$ and $2u+1\in A_n$. Since $u+1 < (u+1)p_1$, we have $u+1 = p_1$. Furthermore, since $u+1 < 2u+1$, we have $2u+1\ge (u+1)p_1$, hence $p_1\le 1$, which contradicts the primeness of $p_1$. This contradiction tells us that the assumption $p_1 | a$ is false.  So, $p_1$ does not divide $a$.

Proof of (ii): Define $$S := \{p_1 + ja\ :\ 1\le j\le k-1\}\subset A_n,$$ $S_1 := \{s\in S\ :\ p_1\ |\ s\}$, and $S_2 := S\backslash S_1$. Due to claim (i), $p_1 | s$ for some $s\in S$ if and only if $s = p_1 + ja$ for some $1\le j\le k-1$ and $p_1$ divides $j$. So, $|S_1| = \floor{(k-1)/p_1} \le \floor{(k-1)/3}$ (due to Lemma \ref{k3}) and so, we have
$$|S_2| \ =\ |S| - |S_1| \ \ge\ (k-1) - \floor{(k-1)/3}.$$
We claim that $s$ is prime for all $s\in S_2$. Otherwise, if $s = ja+p_1\in S_2$ is composite, then there exist $1\le b,c < j$ such that $ja+p_1 = (ba+p_1)(ca+p_1)$. Note that $b, c\ge 1$ because $p_1\ \nmid \ s$ if $s\in S_2$. We have
\begin{align*}
    (k-1)a+p_1 \ &\ge\ ja+p_1\ =\ (ba+p_1)(ca+p_1)\\
    \ &\ge\ (a+p_1)^2 \ =\ (a+2p_1)a+p_1^2\ >\ (k-1)a+p_1,
\end{align*}
a contradiction. The last inequality is due to the assumption that $a+2\ge k$. Therefore, all numbers in $S_2$ are prime; together with $p_1$, we know that $n$ has at least $k-\floor{(k-1)/3}$ distinct prime factors. This completes the proof of statement (ii).

We now deduce a contradiction, which rejects our supposition that $k\le a+2$. Due to statement (ii), we have $\tau(n)\ge 2^{k-\floor{(k-1)/3}}$. By \eqref{k1}, we obtain $$k\ =\ |A_n|\ \ge\ \frac{\tau(n)-3}{2}\ \ge\ \frac{2^{k-\floor{(k-1)/3}}-3}{2},$$
which contradicts our assumption that $k\ge 6$. This contradiction shows $k\ge a+3$.
\end{proof}

\begin{lem}\label{k5}
If $a>2$, then $|A_n| = k\le 5$. 
\end{lem}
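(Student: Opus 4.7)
The plan is to assume for contradiction that $a > 2$ and $k := |A_n| \ge 6$, and then to derive enough divisibility constraints on $a$ that they collide with the upper bound $a \le k - 3$ furnished by Lemma~\ref{k4}. By Lemma~\ref{k3}, $p_1 \ge 3$ and $a$ is even, so $a \ge 4$; by Lemma~\ref{k4}, $k \ge a + 3$, hence $k \ge 7$ and $a \le k - 3$.

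The first thing I would establish is the following blocking claim: every prime $q$ with $q < p_1$ and $q \le k$ must divide $a$. Indeed, if $q \nmid a$, then $\gcd(a,q) = 1$ and the values $\{p_1 + ja \pmod{q} : 0 \le j \le q-1\}$ form a complete residue system modulo $q$, so some index $j \le q - 1 \le k - 1$ satisfies $q \mid p_1 + ja$; this puts $q \mid n$, and since $1 < q < p_1 \le \min A_n < \sqrt{n}$, we obtain $q \in A_n$, contradicting $\min A_n = p_1$.

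I would then split on whether $p_1 \le k - 1$ or $p_1 \ge k$. In the former case the index $j = p_1$ is admissible, so $p_1(1+a) \in A_n$ and $(1+a) \mid n$; since $1 < 1 + a < p_1(1+a) < \sqrt{n}$, in fact $1 + a \in A_n$. Writing $1 + a = p_1 + ja$ with $0 \le j \le k - 1$ and using $a \ge 4$, one checks that the only feasible index is $j = 0$, forcing $a = p_1 - 1$. The case $p_1 = 3$ then contradicts $a > 2$; if $p_1 \ge 5$, the blocking claim forces $\prod_{q < p_1,\ q\text{ prime}} q$ to divide $p_1 - 1$, which already fails at $p_1 = 5$ (since $2 \cdot 3 = 6 > 4$) and keeps failing for every larger prime because the primorial grows superpolynomially in $p_1$.

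In the remaining case $p_1 \ge k$, every prime $q \le k$ with $q \ne p_1$ is strictly less than $p_1$ and, by the claim, divides $a$; combined with $p_1 \nmid a$ (established in part (i) of the proof of Lemma~\ref{k4}), this yields $a \ge \prod_{q \le k - 1,\ q\text{ prime}} q$. For $k = 7$ the right-hand side is already $2 \cdot 3 \cdot 5 = 30$, contradicting $a \le 4$; for larger $k$ the right-hand side picks up each new prime as $k$ crosses it, staying super-exponentially ahead of the linear bound $k - 3$, so no admissible $k$ exists. The main obstacle is really just this last numerical comparison between the primorial and $k - 3$, which is routine once the structural argument above is in place.
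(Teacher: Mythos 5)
Your proof is correct, but it takes a genuinely different route from the paper's. The paper makes a single targeted application of Bertrand's postulate: it picks a prime $q$ with $a/2<q<a$, notes that $q\le a-1\le k-4$ so the first $q$ shifted terms $p_1+ja$ all lie in $A_n$, finds one of them divisible by $q$, and concludes $q\in A_n$; since $q<a<p_1+a$ this forces $q=p_1$, whence $p_1+qa=q(a+1)$ shows $a+1\in A_n$ --- impossible because $q<a+1<q+a$ sandwiches $a+1$ strictly between the first two terms. You instead prove a general ``blocking claim'' (every prime $q<p_1$ with $q\le k$ divides $a$) and split on $p_1\le k-1$ versus $p_1\ge k$. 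The two arguments share a kernel --- both exploit the index $j=p_1$ to produce the divisor $a+1$, and both lean on Lemma \ref{k4} to guarantee the needed indices are admissible --- but because the paper already knows $p_1=q<a$, it gets an immediate contradiction from $a+1\in A_n$, whereas you must separately dispose of the residual possibility $a+1=p_1$, which is where your blocking claim and the primorial-versus-linear comparisons come in. Your approach buys a reusable structural fact about which primes must divide $a$, at the cost of two extra numerical estimates ($\prod_{q<p_1}q>p_1-1$ for $p_1\ge 5$, and $\prod_{q\le k-1}q>k-3$ for $k\ge 7$); you state these somewhat loosely (``grows superpolynomially''), but each follows in one line from Bertrand's postulate (the largest prime $q_0$ below the bound exceeds half of it, so $2q_0$ already beats the target), so there is no real gap.
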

\begin{proof}
Assume, by contradiction, that $a>2$ and $k\ge 6$. By Lemma \ref{k3}, $a$ is even; hence, $a\ge 4$. By Bertrand's postulate, there exists a prime $q$ such that $a/2< q< a$. Let 
$$Q\ =\ \{p_1 + a, p_1 + 2a, \ldots, p_1+qa\}.$$
Since $q<a$, by Lemma \ref{k4}, $Q\subset A_n$. Since $q\ \nmid\ a$, it follows that no two elements of $Q$ are congruent modulo $q$. Hence, there exists $j$ such that $1\le j\le q$ such that $q\ |\ ja+p_1$. Therefore, $q\ |\ n$ and $q < \sqrt{n}$. So, $q\in A_n$. The only possibility is $q = p_1$ because $q<a$. 

Observe that $p_1 + qa = q(a+1)$, so $a+1\in A_n$. Write $$A_n = \{q, q + a, q + 2a, \ldots, q + (k-1)a\}$$
to see that $a+1\notin A_n$. We have a contradiction. Therefore, $k\le 5$. 
\end{proof}

\begin{proof}[Proof of the first part of Theorem \ref{main0}]
The claim that $|A_n|\le 5$ follows immediately from Lemma \ref{k2} and Lemma \ref{k5}.
\end{proof}
\section{Proofs of the second part of Theorem \ref{main0}}
We consider natural numbers $n$ such that $|A_n|$ does not exceed $5$ and $A_n$ is in AP. We prove by case analysis with each case corresponding to each $0\le k\le 4$. For each value of $k$, we may itemize further if necessary. For $k = 5$, see the Appendix. 
\begin{proof}[Proof of the second part of Theorem \ref{main0}] We consider 5 cases. 

\begin{enumerate}
\item Case 1: $k=0$. By \eqref{k1}, $\tau(n) = 2$ or $3$. By \eqref{multau}, $n = p$ or $n = p^2$ for some prime $p$. In both cases, $A_n = \emptyset$, which is vacuously in AP. This corresponds to item (i) of Theorem \ref{main0}.

\item Case 2: $k=1$. By \eqref{k1}, $\tau(n) = 4$ or $5$. By \eqref{multau}, $n = pq$, $p^3$ or $p^4$ for some primes $p<q$. In all cases, $A_n = \{p\}$, which is vacuously in AP. This corresponds to items (ii) and (iii). 

\item Case 3: $k=2$. By \eqref{k1}, $\tau(n) = 6$ or $7$. By \eqref{multau}, $n = p^5$, $pq^2$, $p^2q$ or $p^6$ for some primes $p<q$. 
\begin{itemize}
    \item[(a)] If $n = p^5$, then $A_n = \{p, p^2\}$, which is in AP. This is item (iv).
    \item[(b)] If $n = pq^2$, then $A_n = \{p,q\}$, which is in AP. This is item (v).
    \item[(c)] If $n = p^2q$, then we have two subcases.
    \begin{itemize}
        \item[(i)] If $p^2<q$, then $A_n = \{p, p^2\}$. This is item (vi).
        \item[(ii)] If $p^2>q$, then $A_n = \{p, q\}$. This is item (vii).
    \end{itemize}
    \item[(d)] If $n = p^6$, then $A_n = \{p, p^2\}$, which is in AP. This is item (viii). 
\end{itemize}

\item Case 4: $k = 3$. By \eqref{k1}, $\tau(n) = 8$ or $9$. By \eqref{multau}, $n = pqr$, $pq^3$, $p^3q$, $p^7$, $p^2q^2$ or $p^8$ for some primes $p<q<r$. 
\begin{itemize}
    \item[(a)] If $n = p^7$ or $p^8$, because $p, p^2, p^3$ are not in AP, we eliminate this case. 
    \item[(b)] If $n = p^2q^2$, then $A_n = \{p, p^2, q\}$. 
    \begin{itemize}
    \item[(i)] If $q>p^2$, then in order that $A_n$ is in AP, $q = 2p^2- p = p(2p-1)$, which contradicts that $q$ is a prime. We eliminate this case.
    \item[(ii)] If $p< q < p^2$, then $2q = p^2+p = p(p+1)$. If $p\ge 3$, then $q = p\frac{p+1}{2}$ is not a prime because $\frac{p+1}{2}\ge 2$. Hence, $p = 2$, $q = 3$, and $n = 36$. This is item (ix).
    \end{itemize}
    \item[(c)] If $n = pqr$, either $r>pq$ or $r<pq$.
    \begin{itemize}
        \item[(i)] If $r> pq$, then $A_n = \{p, q, pq\}$. Since $A_n$ is in AP, we have $p = 2q/(q+1) < 2$, a contradiction. 
        \item[(ii)] If $r< pq$, then $A_n = \{p, q, r\}$. Then $2q = p+r$. This is item (x). 
    \end{itemize}
    \item[(d)] If $n = pq^3$, then $A_n = \{p, q, pq\}$. So, $2q = p+pq$ and so, $p = 2q/(q+1)<2$, a contradiction.
    \item[(e)] If $n = p^3q$, either $q>p^3$ or $q<p^3$. 
    \begin{itemize}
        \item[(i)] If $q> p^3$, then $A_n = \{p, p^2, p^3\}$, which is not in AP. 
        \item[(ii)] If $q < p^3$, then $A_n = \{p, p^2, q\}$. If $q> p^2$, then $A_n$ is in AP if and only if $2p^2 = p+q$. So, $q = p(2p-1)$, which contradicts that $q$ is a prime. If $q< p^2$, then $A_n$ is in AP if and only if $2q = p+p^2 = p(p+1)$. Equivalently, $q = p\frac{p+1}{2}$. In order that $q$ is prime, we must have $p = 2$, $q = 3$, and $n = 24$. This is item (xi).
    \end{itemize}
\end{itemize}

\item Case 5: $k = 4$. By \eqref{k1}, $\tau(n) = 10$ or $11$. By \eqref{multau}, $n = p^9$, $pq^4$, $p^4q$ or $p^{10}$ for some primes $p<q$. 
\begin{itemize}
    \item[(a)] If $n = p^9$ or $p^{10}$, then $A_n$ is not in AP because $p, p^2, p^3$ are not in AP. We eliminate this case. 
    \item[(b)] If $n = pq^4$, then $A_n = \{p, q, pq, q^2\}$. It follows that $q+pq = p+q^2$. So, $p = q$, which contradicts our assumption that $p<q$. 
    \item[(c)] If $n = p^4q$, then we consider three subcases.
    \begin{itemize}
        \item[(i)] If $p< q< p^2$, then $A_n = \{p, q, p^2, pq\}$. In order that $A_n$ is in AP, $q + pq = p^2 + p$ and so, $p = q$, a contradiction. 
        \item[(ii)] If $p^2<q< p^4$, then $A_n = \{p, p^2, p^3, q\}$. In order that $A_n$ is in AP, the three numbers $p$, $p^2$, and $p^3$ cannot be the three smallest elements of $A_n$ because $\{p, p^2 p^3\}$ are not in AP. Hence, $p^2<q<p^3$. We have $p^3 + p = p^2 + q$. So, $p | q$, a contradiction. 
        \item[(iii)] If $p^4< q$, then $A_n= \{p, p^2, p^3, p^4\}$, which is not in AP. 
    \end{itemize}
\end{itemize}
\end{enumerate}
We have analyzed all cases up to $k = 4$. For $k = 5$, see the Appendix. 
\end{proof}

\section{Acknowledgement} The author would like to thank the anonymous referees for helpful suggestions that improve the exposition of this paper. 

\appendix
\section{When $|A_n| = 5$}
By \eqref{k1}, $\tau(n) = 12$ or $13$. By \eqref{multau}, $n = pq^5$, $p^5q$, $p^{2}q^3$, $p^3q^2$, $p^2qr$, $pq^2r$ or $pqr^2$ for some primes $p<q<r$. We can eliminate the cases $n = p^{11}$ and $n = p^{12}$ because $\{p, p^2, p^3\}$ are not in AP.
\begin{enumerate}

\item If $n = pq^5$, then $A_n = \{p, q, pq, q^2, pq^2\}$ with $p < q< pq < q^2 < pq^2$. So, $p+pq = 2q$ and so $p = 2q/(q+1)<2$, a contradiction.

\item If $n = p^5q$, we consider the following subcases.
\begin{itemize}
    \item[(a)] If $q< p^3$, then $A_n = \{p, p^2, p^3, q, pq\}$. 
    \begin{itemize}
    \item[(i)] If $q<p^2$, then $p < q< p^2< pq < p^3$. So, $2q = p+p^2 = p(p+1)$, which gives $(p,q) = (2,3)$. However, $A_{96}$ is not in AP.
    \item[(ii)] If $p^2<q< p^3$, then $p<p^2<q<p^3<pq$. So, $2q = p^2 + p^3 = p^2(p+1)$, which implies that $p|q$, which contradicts that $q$ is a prime greater than $p$.  
    \end{itemize}
    \item[(b)] If $p^3<q$, then the three smallest numbers in $A_n$ are $p, p^2, p^3$, which are not in AP. 
\end{itemize}

\item If $n = p^2q^3$, either $p^2>q$ or $p^2<q$. 
    \begin{itemize}
        \item[(a)] If $p<q<p^2$, then $A_n = \{p, q, p^2,  pq, q^2\}$ with $p < q< p^2 < pq < q^2$. So, $2pq = p^2 + q^2$, which implies that $p(2q-p) = q^2$ and so, $p|q$, a contradiction. 
        \item[(b)] If $p^2<q$, then $A_n = \{p, p^2, q, pq, p^2q\}$ with $p < p^2 < q< pq < p^2q$. So, $2q = p^2 + pq$; thus, $p = 2q/(p+q) < 2$, a contradiction. 
    \end{itemize}
    
\item If $n = p^3q^2$, either $p^3> q^2$ or $p^3<q^2$. 
    \begin{itemize}
        \item[(a)] If $p^2< q^2< p^3$, then $A_n = \{p, q, p^2, pq, q^2\}$ with $p < q< p^2 < pq < q^2$. So, $2pq = p^2 + q^2$, which implies that $p| q$, a contradiction. 
        \item[(b)] If $p^3< q^2$, then $A_n = \{p, p^2, p^3, q, pq\}$. 
        \begin{itemize}
        \item[(i)] If $q< p^2$, we have $p < q< p^2< pq < p^3$. So, $2pq = p^3 + p^2$; equivalently, $q = p\frac{p+1}{2}$, which gives $(p,q) = (2, 3)$. However, $A_{72} = \{2, 3, 4, 6\}$, which is not in AP.
        \item[(ii)] If $p^2< q< p^3$, we have $p<p^2<q<p^3<pq$. So, $2q = p^2+p^3 = p^2(p+1)$, which contradicts the primeness of $q$. 
        \item[(iii)] If $p^3 < q$, we have $p<p^2<p^3<q<pq$. Since $p, p^2, p^3$ are not in AP, we eliminate this case. 
        \end{itemize}
    \end{itemize}


\item If $n = p^2qr$, either $p^2q>r$ or $p^2q<r$. 
    \begin{itemize}
        \item[(a)] If $p^2q>r$, then $A_n = \{p, p^2, q, pq, r\}$. If $q>p^2$, the three smallest numbers in $A_n$ are $p < p^2 < q$. In order that $A_n$ is in AP, $2p^2 = p+q$ and so, $p | q$, a contradiction. So, $p<q<p^2$. We consider two further subcases $q < r< p^2$ and $p^2<r$. 
        \begin{itemize}
            \item[(i)] If $p<q<r<p^2<pq$, then $q+pq = p^2 + r$ and $2q = p+r$. Substituting $r = 2q - p$ into the former, we have $pq = p^2 + q - p$, which implies that $p\ |\ q$, a contradiction.  
            \item[(ii)] If $p^2<r$, then the three smallest numbers are $p < q< p^2$. So, $2q = p(p+1)$, which gives $(p,q) = (2,3)$. 
            \begin{itemize}
            \item If $r<pq$, then $r = 5$ and $n = 60$. This is item (xii). 
            \item If $r>pq$, then $p<q<p^2<pq<r$. So, $2pq = p^2 + r$ and so, $r = 8$, which contradicts that $r$ is prime.
            \end{itemize}
        \end{itemize}
        \item[(b)] If $p^2q<r$, then $A_n = \{p, p^2, q, pq, p^2q\}$. Similar as above, we have $p<q<p^2$. So, 
        $p<q<p^2<pq<p^2q$. Since the three smallest numbers are $p< q< p^2$, we have $2q = p + p^2$, which gives $(p,q) = (2,3)$. However, $2,3,4,6,12$ are not in AP. 
    \end{itemize}


\item If $n = pq^2r$, we consider two cases $r>pq$ and $r<pq$.
    \begin{itemize}
        \item[(a)] If $r>pq$, then the three smallest numbers in $A_n$ are $p<q<pq$. So, $2q = p+pq$, which gives $p = 2q/(q+1)<2$, a contradiction. 
        \item[(b)] If $r<pq$, then the four smallest numbers in $A_n$ are $p<q<r<pq$. So, $2r = pq+q = q(p+1)$, which implies that either $r|q$ or $r|(p+1)$. However, both cases contradict that $p < q < r$.
    \end{itemize}


\item If $n = pqr^2$, then $A_n = \{p, q, pq, r, pr\}$. Either $p<q<pq<r<pr$ or $p<q<r<pq<pr$. None of these cases (which we have seen above) gives us an $A_n$ in AP.
\end{enumerate}

\end{document}